\newtheorem{thm}{Theorem}[section]
\newtheorem{cor}[thm]{Corollary}
\newtheorem{lem}[thm]{Lemma}
\theoremstyle{definition}
\theoremstyle{question}
\theoremstyle{Conjecture}
\numberwithin{equation}{section}
\begin{document}

\title[groups with given same order type]{Shen's conjecture on groups with given same order type}%
\author{L. Jafari Taghvasani and M. Zarrin}%

\address{Department of Mathematics, University of Kurdistan, P.O. Box: 416, Sanandaj, Iran}%
 \email{L.jafari@sci.uok.ac.ir and Zarrin@ipm.ir}
\begin{abstract}
For any group $G$, we define an equivalence relation $\thicksim$ as below:
\[\forall \ g, h \in G \ \ g\thicksim h \Longleftrightarrow |g|=|h|\]
the set of sizes of equivalence classes with respect to this relation is called the same-order type of $G$ and denote by $\alpha{(G)}$. In this paper, we give a partial answer to a conjecture raised by Shen. In fact, we show that if $G$ is a nilpotent group, then $|\pi(G)|\leq |\alpha{(G)}|$, where $\pi(G)$ is the set of prime divisors of order of $G$.  Also we investigate the groups all of whose
proper subgroups, say $H$ have $|\alpha{(H)}|\leq 2$.\\\\
 {\bf Keywords}.
  Nilpotent groups, Same-order type.\\
{\bf Mathematics Subject Classification (2000)}. 20D60.
\end{abstract}
\maketitle

\section{\textbf{ Introduction and results}}
Let $G$ be a group, define an equivalence relation $\thicksim$ as below:
\[\forall \ g, h \in G \ \ g\thicksim h \Longleftrightarrow |g|=|h|\]
the set of sizes of equivalence classes with respect to this relation is called the same-order type of $G$. For instance, the same-order type of the quaternion group $Q_8$ is $\{1,6\}$. The only groups of type $\{1\}$ are $1$ , $\Bbb{Z}_2$. In \cite{shen}, Shen showed that a group of same-order type $\{1,n\} (\{1,m,n\})$ is nilpotent (solvable, respectively). Furthermore he gave the structure of these groups. In this paper, we give a partial answer to a conjecture raised by Shen in \cite{shen} and we prove that if $G$ is a nilpotent group, then $|\pi(G)|\leq |\alpha{(G)}|$.

Given a class of groups $\mathcal{X}$, we say that a group $G$ is a minimal non-$\mathcal{X}$-group,
or an $\mathcal{X}$-critical group, if $G\not\in X$, but all proper subgroups of $G$ belong to $\mathcal{X}$. It is
clear that detailed knowledge of the structure of minimal non-$\mathcal{X}$-groups can provide
insight into what makes a group belong to $\mathcal{X}$.  For
instance, minimal non-nilpotent groups were analysed by  Schmidt \cite{sch} and proved that such
groups are solvable (see also \cite{zar}). Suppose that $t$ be a positive integer and $\mathcal{Y}_t$ be the class of all groups in which $|\alpha{(G)}|\leq t$. Here, we determine the structure of minimal non-$\mathcal{Y}_2$-group.

Denote by $\phi$ and $S^G_n$ the Euler's function and  the number of elements of order $n$ in a group $G$ respectively. $X_n$ is the set of all elements of order $n$ in a group $G$. We use symbols $\pi_e(G)$ for the set of element orders.\\

\section{\textbf{Shen's conjecture}}

In \cite{shen}, Shen posed a conjecture as follows: \newline
Let $G$ be a group with same-order type $\{1, n_2, \cdots , n_r\}$. Then $|\pi(G)| \leq r$. \newline
Here we give a partial answer to a  this conjecture. Note that by Lemma $3$ of \cite{ab}, we can assume that $G$ is finite. To prove Shen's conjecture we need the following interesting lemmas.

\begin{lem}\label{a}
Suppose that $G$ is a nilpotent group, $m,n\in \pi_e(G)$ and $(m, n)=1$. Then
\begin{align*}
S^G_{mn}=S^G_mS^G_n.
\end{align*}
\end{lem}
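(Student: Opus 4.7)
The plan is to exhibit an explicit bijection between $X_{mn}$ and the Cartesian product $X_m \times X_n$, which immediately yields $S^G_{mn} = S^G_m S^G_n$. The construction is the standard coprime-decomposition of a group element, and nilpotency enters only in verifying surjectivity.

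First I would fix integers $a,b$ with $am+bn=1$ and define $\Phi:X_{mn}\to X_m\times X_n$ by $\Phi(g)=(g^{bn},\,g^{am})$. A quick order calculation shows $|g^{bn}|=mn/\gcd(mn,bn)=m/\gcd(m,b)$, and since $am+bn=1$ forces $\gcd(m,b)=1$, we get $|g^{bn}|=m$; symmetrically $|g^{am}|=n$. The two components are powers of $g$, hence commute.

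Next I would define $\Psi:X_m\times X_n\to X_{mn}$ by $\Psi(x,y)=xy$. For this to land in $X_{mn}$ one needs $x$ and $y$ to commute; granted that, the standard fact $|xy|=|x|\,|y|$ for commuting elements of coprime order gives $|xy|=mn$. To see that $\Phi$ and $\Psi$ are mutually inverse, compute $\Psi(\Phi(g))=g^{bn}g^{am}=g^{am+bn}=g$, and conversely $(xy)^{bn}=x^{bn}y^{bn}=x^{bn}=x$ (using $y^n=e$ and $bn\equiv 1\pmod m$), and similarly $(xy)^{am}=y$.

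The only nontrivial step -- and the place where the nilpotency hypothesis is essential -- is that $\Psi$ is well-defined on \emph{all} of $X_m\times X_n$, i.e.\ that every $x\in X_m$ commutes with every $y\in X_n$. For this I would invoke the structure theorem that a finite nilpotent group is the internal direct product of its Sylow subgroups: writing $G=P_{p_1}\times\cdots\times P_{p_k}$, an element of order $m$ has nontrivial components only in those $P_{p_i}$ with $p_i\mid m$, and similarly for an element of order $n$; since $\gcd(m,n)=1$ the two sets of Sylow factors are disjoint, so the elements commute. With the bijection established, $S^G_{mn}=|X_{mn}|=|X_m\times X_n|=S^G_m S^G_n$. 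I do not foresee a real obstacle -- the main point is simply to recognize that nilpotency converts ``commuting pairs of coprime order'' into ``all pairs of coprime order.''
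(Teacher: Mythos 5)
Your proof is correct and follows essentially the same route as the paper: the coprime power decomposition $g=g^{bn}g^{am}$ in one direction, and nilpotency (via the Sylow direct product decomposition) to guarantee that arbitrary elements of coprime orders commute in the other. In fact your version is slightly more complete, since by exhibiting $\Phi$ and $\Psi$ as mutually inverse you verify the injectivity of $(x,y)\mapsto xy$ on $X_m\times X_n$, a counting step the paper's proof leaves implicit after establishing only the set equality $X_{mn}=X_mX_n$.
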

\begin{proof}
Let $g\in X_{mn}$. As $(m, n)=1$, so there exist $y, z \in G$, shuch that $o(y)=m$, $o(z)=n$ and $g=yz$. So $g \in X_mX_n$ and $X_{mn}\subseteq X_mX_n$. On the other hand, if $y\in X_m$ and $z\in X_n$, then, as $G$ is nilpotent, we can obtain that $yz=zy$ and so $o(yz)=o(zy)=o(z)o(y)=mn$. It follows that $X_mX_n \subseteq X_{mn}$ and so $X_{mn}=X_mX_n$.
\end{proof}
\begin{cor}
Let $G$ be a nilpotent group, $m\in \pi_e(G)$ and $m=p_1^{h_1}p_2^{h_2} \cdots  p_t^{h_t}$. Then
\begin{align*}
S_m^G=S_{p_1^{h_1}}^G S_{p_2^{h_2}}^G \cdots  S_{p_t^{h_t}}^G.
\end{align*}
\end{cor}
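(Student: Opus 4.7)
The plan is to deduce this corollary from Lemma \ref{a} by induction on the number $t$ of distinct prime divisors of $m$.

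First I would handle the base case $t=1$, which is tautological since then $m = p_1^{h_1}$ and both sides agree.

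For the inductive step, assume the formula holds whenever $m$ has fewer than $t$ distinct prime factors. Write $m = p_1^{h_1} \cdot n$ where $n = p_2^{h_2} \cdots p_t^{h_t}$. Since $\gcd(p_1^{h_1}, n) = 1$ and the existence of an element of order $m$ in $G$ forces (via cyclic subgroup considerations) the existence of elements of orders $p_1^{h_1}$ and $n$, we have $p_1^{h_1}, n \in \pi_e(G)$. Hence Lemma \ref{a} applies and gives
\begin{align*}
S^G_m = S^G_{p_1^{h_1}} \, S^G_n.
\end{align*}
Applying the inductive hypothesis to $n$ (which has $t-1$ distinct prime factors) yields $S^G_n = S^G_{p_2^{h_2}} \cdots S^G_{p_t^{h_t}}$, and substituting completes the step.

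Since this is a straightforward iteration of Lemma \ref{a}, I do not anticipate any real obstacle; the only minor point requiring care is verifying that each prime power $p_i^{h_i}$ indeed lies in $\pi_e(G)$ so that the lemma is applicable at every stage, and this follows at once from the fact that $G$ contains a cyclic subgroup of order $m$ (being nilpotent, in particular a direct product of its Sylow subgroups), whose Sylow components realize each $p_i^{h_i}$ as an element order.
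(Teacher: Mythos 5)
Your proof is correct and is exactly the argument the paper intends: the corollary is stated without proof as an immediate consequence of Lemma \ref{a}, and your induction on $t$ (splitting off $p_1^{h_1}$ and noting that every divisor of $m$ lies in $\pi_e(G)$ because $G$ has a cyclic subgroup of order $m$) supplies precisely that routine iteration.
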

\begin{thm}
Let $G$ be a nilpotent group. Then
\begin{enumerate}
\item If $|\pi (G)| \leq 2$, then
$|\pi (G)|\leq |\alpha{(G)}|.$
\item  If $|\pi (G)| \geq 3$, then
$\pi (G)|\leq |\alpha{(G)}|-1.$
\end{enumerate}
\end{thm}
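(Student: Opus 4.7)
Since $G$ is nilpotent, write $G = P_1 \times \cdots \times P_t$ with $\pi(G) = \{p_1, \ldots, p_t\}$; the Corollary above then gives $S^G_m = \prod_i S^{P_i}_{p_i^{k_i}}$ whenever $m = \prod p_i^{k_i} \in \pi_e(G)$. Introduce $T_i := \{S^{P_i}_{p_i^k} : p_i^k \in \pi_e(P_i)\}$, which always contains $1$; the same-order type then satisfies $\alpha(G) = T_1 T_2 \cdots T_t$ as a set of products. A preliminary observation I would record is that $T_i = \{1\}$ precisely when $P_i \cong \mathbb{Z}_2$, while in every other case some $u_i \in T_i \setminus \{1\}$ satisfies $u_i \geq 2$; this follows from Cauchy's theorem together with the standard divisibility $\phi(p^j) \mid S^P_{p^j}$ for a $p$-group $P$.

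With this setup, part (1) is immediate: for $t \leq 1$ the bound is vacuous, and for $t = 2$ at most one of the two Sylow subgroups is $\mathbb{Z}_2$, so the other contributes some $u_j \geq 2$ and $\{1, u_j\} \subseteq \alpha(G)$. For part (2), assume $t \geq 3$. If no $P_i$ equals $\mathbb{Z}_2$, then each $u_i \geq 2$ and the strictly increasing chain
\[
1 < u_1 < u_1 u_2 < \cdots < u_1 u_2 \cdots u_t
\]
immediately supplies the required $t + 1$ distinct values in $\alpha(G)$.

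The substantive case is when one Sylow, say $P_1$, is $\mathbb{Z}_2$; then $\alpha(G) = T_2 \cdots T_t$ and $p_2, \ldots, p_t$ are all odd, so $u_i \geq p_i - 1 \geq 2$ for each $i \geq 2$. If the $u_i$'s are not all equal, say $u_2 \neq u_3$, then the values $1, u_2, u_3, u_2 u_3, u_2 u_3 u_4, \ldots, u_2 \cdots u_t$ form $t + 1$ pairwise distinct elements. The main obstacle is the remaining sub-case where $u_2 = \cdots = u_t =: u$. Here I would combine the Frobenius congruence $S^{P_j}_{p_j} \equiv -1 \pmod{p_j}$ with $\phi(p_j^k) \mid S^{P_j}_{p_j^k}$ to show that $|T_j| = 2$ forces $P_j$ to have exponent $p_j$, and hence $u + 1 = |P_j| = p_j^{k_j}$; since this equation cannot hold simultaneously for two distinct odd primes, some $j$ must satisfy $|T_j| \geq 3$, yielding an element $w \in T_j$ with $w > u$. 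Then
\[
\{1, u, u^2, \ldots, u^{t-1}\} \cup \{wu^{t-2}\} \subseteq \alpha(G)
\]
contains $t + 1$ distinct elements, because if $wu^{t-2} = u^k$ for some $0 \leq k \leq t - 1$ then $w = u^{k - t + 2}$ with exponent $\geq 2$ (as $w > u \geq 2$), forcing $k \geq t$, a contradiction.
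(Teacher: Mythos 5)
Your proof is correct in substance, but it follows a genuinely different route from the paper's. The paper argues by induction on $n=|\pi(G)|$: the base case $n=3$ is handled by exhibiting $\{1,r,t,rt\}\subseteq\alpha(G)$ for two distinct nontrivial class sizes $r\neq t$ drawn from the $\alpha(P_i)$, and the inductive step produces one new member of $\alpha(G)$ by multiplying the maximal class sizes of $P_1,\dots,P_{n-1}$ by $S_{p_n}>1$, which strictly exceeds every element of $\alpha(G_{n-1})$. You instead give a direct, non-inductive case analysis on the factor sets $T_i$, splitting on whether the chosen $u_i$ are all equal. Notably, your ``all equal'' sub-case supplies a justification that the paper omits: the paper simply asserts that $\alpha(P_1)\cup\alpha(P_2)\cup\alpha(P_3)$ contains two distinct nontrivial values, whereas you prove the underlying fact (that $|T_j|=2$ forces $\exp(P_j)=p_j$ and $u+1=|P_j|=p_j^{k_j}$, which cannot hold for two distinct odd primes) via the Frobenius congruence and $\phi(p_j^k)\mid S^{P_j}_{p_j^k}$. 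So your argument is, if anything, more complete on the one point where the paper is thin; the paper's induction, in exchange, is shorter once the base case is granted.

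One small slip: when some $|T_j|\geq 3$ you claim an element $w\in T_j$ with $w>u$, but nothing forces this --- $u=u_j$ might have been the maximum of $T_j$, leaving only $w<u$. Fortunately your conclusion survives for any $w\in T_j\setminus\{1,u\}$: if $wu^{t-2}=u^k$ with $0\leq k\leq t-1$, then $w=u^{k-(t-2)}$, which is $\leq 1$ for $k<t-2$, equals $1$ for $k=t-2$, and equals $u$ for $k=t-1$, all excluded since $w\geq 2$ and $w\neq u$. Replacing the ``$w>u$'' clause by this observation closes the gap with no other changes.
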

\begin{proof}
(1).\; If $|\pi (G)|=1$, then $G$ is a $p$-group and obviously $|\pi(G)| \leq |\alpha{(G)}|$. Let $\pi (G)=\{p, q\}$. Since $G$ is nilpotent, $G=P\times Q$, where $|P|=p^n$ and $|Q|=q^m$ are $p$-sylow and $q$-sylow subgroups of $G$, respectively. If $p=2$ and $n=1$, then $G\cong \Bbb{Z}_2 \times Q$. Clearly $\alpha{(G)} =\alpha{(Q)}$. Now if $exp(Q)=q$, then $s_q^Q=q^m-1$. So $|\alpha{(G)}|=|\alpha{(Q)}|=|\pi(G)|=2$. Otherwise if $exp(Q)\neq q$, then there exists $x\in Q$ such that $o(x)=q^2$ and since $S_q\neq S_{q^2}$, so $|\alpha{(Q)}|\geq 3$ and $|\alpha{(G)}|\geq 3> |\pi(G)|$. In other values of $p$ and $n$, in view of Lemma \ref{a}, the conclusion is trivial.

(2).\;By hypothesis since $G$ is nilpotent, so $G=P_1\times \cdots \times P_n$, where $P_i$'s are $p_i$-sylow subgroups of $G$ and $p_1< p_2< \cdots <p_n$. We prove by induction on $n$.
If $n=3$, the $\alpha{(P_1)}\cup \alpha{(P_2)} \cup \alpha{(P_3)}\supseteq \{r,t\}$, for distinct numbers $r$ and $t$, so $\alpha{(G)} \supseteq \{1, r, t, rt\}$, as desired.

Now assume the conclusion is true for $G_{n-1}=P_1\times \cdots \times P_{n-1}$. Let for any $1\leq i\leq n-1$, $\alpha{(P_i)}=\{1, n^i_1, \cdots, n^i_{t_i}\}$ and $S_{{p_i}^{h_i}}$ for $1\leq i\leq n-1$ be the maximum number of the set $\alpha(P_i)$. Now for any $l\in \pi_e(G_{n-1})$, assume that $l=p_1^{\beta _1} \cdots p_r^{\beta _r}$, where $1\leq r \leq n-1$. By the maximality of $S_{{p_i}^{h_i}}$'s , we have
\begin{align*}
S_l=S_{p_1^{\beta _1} \cdots p_r^{\beta _r}}= & S_{{p_1}^{\beta _1}}  \cdots S_{{p_r}^{\beta _r}} \\
                                                                 \leq & S_{{p_1}^{h_1}} \cdots  S_{{p_{n-1}}^{h_{n-1}}} \\
\end{align*}
Besides, $S^G_{p_n} \neq 0$ and since $\phi (p_n) =p_n -1 \mid S_{p_n}$, so $S_{p_n}\neq 1$. Hence we have
\begin{align*}
S_l\leq S_{{p_1}^{h_1}}  \cdots  S_{{p_{n-1}}^{h_{n-1}}} \lneq & S_{{p_1}^{h_1}}  \cdots  S_{{p_{n-1}}^{h_{n-1}}}  S_{p_n} \\
=& S_{{{p_1}^{h_1}} \cdots p_{n-1}^{h_{n-1}}p_n}
\end{align*}
It followes that $S_{{{p_1}^{h_1}} \cdots p_{n-1}^{h_{n-1}}p_n} \in \alpha{(G_{n})} \setminus \alpha{(G_{n-1})}$.
Therefore
\begin{align*}
|\alpha(G_n)|=|\alpha{(G)}|\geq |\alpha{(G_{n-1})}|+1
 \end{align*} and so by induction hypothesis;
\begin{align*}
|\pi (G)|=n=n-1+1< |\alpha{(G_{n-1})}|+1 \leq |\alpha{(G_n)}|=|\alpha{(G)}|.
\end{align*}
and the conclusion is proved.
\end{proof}

\section{On same-order type of subgroups of a group}

In this section, we determine the structure of minimal non-$\mathcal{Y}_2$-group, as follows.
\begin{thm}\label{1}
Let $G$ be minimal non-$\mathcal{Y}_2$-group. Then $G$ is a Frobenius or 2-Frobenius group.
\end{thm}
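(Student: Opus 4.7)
The plan is to show every proper subgroup of $G$ is nilpotent, reducing to the well-studied setting of minimal non-nilpotent (Schmidt) groups, and then to read off the Frobenius/2-Frobenius structure from the classical description of such groups.

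First I would argue that every proper subgroup $H<G$ is nilpotent. Indeed, the hypothesis gives $|\alpha(H)|\le 2$, so the same-order type of $H$ is $\{1\}$ or $\{1,n\}$; in the former case $H$ is trivial or $\mathbb{Z}_2$, in the latter Shen's theorem (recalled in the introduction) gives that $H$ is nilpotent. Next I would rule out that $G$ itself is nilpotent: writing the Sylow decomposition $G=P_1\times\cdots\times P_k$ and applying Lemma~\ref{a} together with its corollary, $\alpha(G)$ is controlled multiplicatively by the $\alpha(P_i)$'s. When $k\ge 2$ each $P_i$ is a proper subgroup with $|\alpha(P_i)|\le 2$, and a short computation forces $|\alpha(G)|\le 2$, contradicting $G\notin\mathcal{Y}_2$; the remaining case $k=1$ (so $G$ is a $p$-group) is excluded by inspecting maximal proper subgroups and noting that a minimal non-$\mathcal{Y}_2$ $p$-group would have to exhibit three distinct $S$-values already in some proper subgroup.

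Having established that $G$ is minimal non-nilpotent, I would invoke Schmidt's theorem: $G=P\rtimes\langle y\rangle$ with $P$ a normal Sylow $p$-subgroup, $\langle y\rangle$ a cyclic Sylow $q$-subgroup ($p\ne q$), $y^q\in Z(G)$, and $P/\Phi(P)$ a chief factor. The nilpotency of every proper subgroup then forces $C_{P/\Phi(P)}(y)=1$, since any nontrivial fixed point $x\Phi(P)$ would yield a proper non-nilpotent subgroup $\langle x,y\rangle\le G$.

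Finally I would split into cases. If $\Phi(P)=1$, then $P$ is elementary abelian and $y$ acts fixed-point-freely on $P$, so $G$ is a Frobenius group with kernel $P$ and complement $\langle y\rangle$. Otherwise the normal series $1\triangleleft\Phi(P)\triangleleft P\triangleleft G$, combined with the Frobenius quotient $G/\Phi(P)=(P/\Phi(P))\rtimes\langle y\rangle$ and the fixed-point-free action of a generator on the layers of $\Phi(P)$, exhibits $G$ as a 2-Frobenius group in the required sense. The main obstacle is this last verification: one must carefully match the Schmidt data to the precise definition of a 2-Frobenius group, checking in particular that the inner Frobenius structure is realized by a suitable choice of normal series and that both the kernel and the complement of each Frobenius layer have the correct coprime orders.
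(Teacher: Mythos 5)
Your overall strategy (reduce to Schmidt groups, then read off the Frobenius structure) is close in spirit to the paper's, but two of your steps are genuinely false, and they are exactly the steps where the paper does something different. First, the exclusion of nilpotent $G$: it is not true that $k\ge 2$ Sylow factors each with $|\alpha(P_i)|\le 2$ force $|\alpha(G)|\le 2$. Lemma~\ref{a} makes the class sizes \emph{multiply}, so they proliferate rather than collapse: $\mathbb{Z}_{15}=\mathbb{Z}_3\times\mathbb{Z}_5$ has $\alpha(\mathbb{Z}_{15})=\{1,2,4,8\}$ while both Sylow subgroups lie in $\mathcal{Y}_2$ (indeed the paper's own Theorem~2.3 asserts the opposite inequality to the one you want). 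Likewise $\mathbb{Z}_{p^2}$ for odd $p$ defeats your dismissal of the $p$-group case by ``inspecting maximal subgroups.'' Second, and more seriously, ``minimal non-nilpotent'' is not enough to conclude Frobenius or 2-Frobenius: the dicyclic group $\mathbb{Z}_3\rtimes\mathbb{Z}_4$ of order $12$ is a Schmidt group all of whose proper subgroups are cyclic, its generator $y$ does act fixed-point-freely on $P=\mathbb{Z}_3$, yet $y^2$ centralizes $P$, the group contains an element of order $6$, and it is neither Frobenius nor 2-Frobenius. Your fixed-point argument only controls the action of the generator of the complement, not of every nontrivial element of $\langle y\rangle$, so the final verification cannot be completed.

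The ingredient you are missing is the one the paper extracts \emph{before} invoking nilpotency: from the congruences $p\mid 1+s_p^H$ and $p\mid 1+s_p^H+s_{p^2}^H$ together with Lemma~\ref{a}, it argues that every nontrivial proper subgroup $H$ is a $p$-group of exponent $p$, whence $G$ has no element of order $pq$ for distinct $p,q\in\pi(G)$, i.e.\ the prime graph of $G$ is disconnected. It then quotes Williams' Theorem A (solvable groups with disconnected prime graph are Frobenius or 2-Frobenius) rather than matching the Schmidt data to the definitions by hand. Without the ``no element of order $pq$'' step your route is blocked. Be aware, too, that this step is delicate: the paper's own congruence argument silently excludes $p=2$ and the exponent-$p$ claim for $G$ itself, and the examples above ($\mathbb{Z}_{15}$, $\mathbb{Z}_9$, $\mathbb{Z}_3\rtimes\mathbb{Z}_4$ are all minimal non-$\mathcal{Y}_2$-groups) indicate the statement as given needs additional hypotheses.
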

\begin{proof}
 Let $H$ be a non-trivial proper subgroup of $G$ and $p\in \pi (H)$. Suppose, on the contrary, that $q\in \pi(G)$ and $q\neq p$. Since $p\mid 1+s_p^{H}$ and $q\mid 1+s_q^{H}$, so $s_p^{H}, s_q^{H}\neq \{0, 1\}$, hence $s_p^{H}=s_q^{H}=n_H$. Now as $H$ is nilpotent, according to Lemma \ref{a}, we have $s_{pq}^H=s_p^H s_q^H=n_H^2$, a contradiction. Thus $H$ is a $p$-group. On the other hand, since
\begin{align*}
p\mid 1+s_p^{H}+s_{p^2}^H,
\end{align*}
so $s_{p^2}^H\neq \{1, n_H\}$, since otherwise $p\mid 1$, a contradiction. Hence $s_{p^2}^{H}=0$. It follows that every proper subgroup of $G$ is $p$-group of exponent $p$. If $p, q\in \pi (G)$, then $G$ has no element of order $pq$. If $G$ is nilpotent, then $G$ is a $p$-group of exponent $p$ and it is easy to see that such groups are in $\mathcal{Y}_2$, a contrary.  If $G$ is non-nilpotent, then, as proper subgroup of $G$ has the same-order type $\{1, n\}$, Theorem 2.1 of Shen follows that $G$ is a Schmidt group and so $|\pi (G)|=2$. Now, as $G$ has no element of order $pq$, Theorem A of \cite{xx}, completes the proof.
\end{proof}

\end{document}